\theoremstyle{plain}
\newtheorem{theorem}{Theorem}[section]
\theoremstyle{definition}
\newtheorem{definition}[theorem]{Definition}
\newtheorem{counter example}[theorem]{Counter Example}
\newtheorem{corollary}[theorem]{Corollary}
\newtheorem{example}[theorem]{Example}
\newtheorem{question}[theorem]{Question}
\numberwithin{equation}{section}
\begin{document}

\title[$Ps$-normal and $Ps$-Tychonoff spaces]{$Ps$-normal and $Ps$-Tychonoff spaces}

\author[S. Bag]{Sagarmoy Bag}
\address{Department of Pure Mathematics, University of Calcutta, 35, Ballygunge Circular Road, Kolkata 700019, West Bengal, India}
\email{sagarmoy.bag01@gmail.com}
\author[R.C.Manna]{Ram Chandra Manna}
\address{Ramakrishna Mission Vidyamandir, Belur Math, Howrah-711202, West Bengal, India}
\email{mannaramchandra8@gmail.com}
\author[S. K. Patra]{Sourav Kanti Patra}
\address{Ramakrishna Mission Vidyamandir, Belur Math, Howrah-711202, West Bengal, India}
\email{souravkantipatra@gmail.com}
\thanks{The first author thanks the NBHM, Mumbai-400 001, India, for financial support. The second author thanks to Government of West Bengal, India, for financial support.}

\subjclass[2010]{Primary 54C40; Secondary 46E25}

\large
\keywords{$Ps$-Tychonoff, $C$--Tychonoff, $L$-Tychonoff, $Ps$-normal, $C$-normal, $CC$-normal}
\thanks {}

\maketitle
\begin{abstract}
A space $X$ is called $Ps$-normal($Ps$-Tychonoff) space if there exists a normal(Tychonoff) space $Y$ and a bijection $f: X\mapsto Y$ such that $f\lvert_K:K\mapsto f(K)$ is homeomorphism for any pseudocompact subset $K$ of $X$. We establish a few relations between $C$-normal, $CC$-normal, $L$-normal, $C$-Tychonoff, $CC$-Tychonoff spaces with $Ps$-normal and $Ps$-Tychonoff spaces. 
\end{abstract}

\section{Introduction}

A topological space $X$ is called Pseudocompact if every real valued continuous function on $X$ is bounded.

\begin{definition}	
	A space $X$ is called $C$-normal(resp. $CC$-normal, $L$-normal) space if there exists a normal space $Y$ and a bijection $f: X\mapsto Y$ such that $f\lvert_K:K\mapsto f(K)$ is homeomorphism for any compact (resp. countably compact, Lindelof) subspace $K$ of $X$.
\end{definition}

A. V. Arhangel'skii introduced the notion of $C$-normal spaces while presenting a talk in a seminar held in Mathematics Department, King Abdulaziz University at Jeddah, Saudi Arabia in 2012. $C$-normality is a generalization of normality. Furthermore the authors in \cite{KA2017} initiated $CC$-normal spaces as another generalization of normal spaces. Incidentally every $CC$-normal space is $C$-normal. In the first part of this paper we offer a new generalization of normal spaces namely $Ps$-normal space. We show that each $Ps$-normal space is $CC$-normal. We give an example of a $Ps$-normal space which is not normal [Example 2.3] and a $CC$-normal space which is not $Ps$-normal [Example 2.6].

\begin{definition}	
	A space $X$ is called $C$-Tychonoff( $L$-Tychonoff) space if there exists a Tychonoff space $Y$ and a bijection $f: X\mapsto Y$ such that $f\lvert_K:K\mapsto f(K)$ is homeomorphism for any compact ( Lindelof) subspace $K$ of $X$.
\end{definition}

We define $Ps$-Tychonoff spaces in the second part of this paper. We show that $Ps$-normality and $Ps$-Tychonoffness are independent to each other. $Ps$-normality and $Ps$-Tychonoffness are both topological property and also additive. $Ps$-normality is not heriditary property but $Ps$-Tychonoff property is heriditary.

In last section we make some remarks on $C$-normal and $C$-Tychonoff spaces.

\section{Ps-normal space}

\begin{theorem}
	Every $Ps$-normal space is $CC$-normal.
\end{theorem}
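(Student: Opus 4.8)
The key observation is that every countably compact subspace of a topological space is pseudocompact. Hence the class of pseudocompact subsets is at least as large as the class of countably compact subsets, and a bijection that restricts to a homeomorphism on every pseudocompact subset automatically restricts to a homeomorphism on every countably compact subset.

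So the plan is as follows. Let $X$ be $Ps$-normal. By definition there is a normal space $Y$ and a bijection $f:X\to Y$ such that $f\lvert_K:K\to f(K)$ is a homeomorphism for every pseudocompact subset $K$ of $X$. I would then take an arbitrary countably compact subspace $K$ of $X$ and argue that $K$ is pseudocompact: any real-valued continuous function on $K$ has image a countably compact, hence bounded, subset of $\mathbb{R}$ (a countably compact subset of $\mathbb{R}$ is bounded since otherwise $\{(-n,n)\cap g(K):n\in\mathbb{N}\}$ would be a countable open cover with no finite subcover). Therefore $f\lvert_K:K\to f(K)$ is a homeomorphism. Since $K$ was an arbitrary countably compact subspace and $Y$ is normal, this exhibits $X$ as $CC$-normal.

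I do not anticipate a genuine obstacle here; the statement is essentially a containment of subclasses ("countably compact $\subseteq$ pseudocompact") transported through the defining property. The one point to state carefully is the elementary fact that continuous images of countably compact spaces are countably compact and that countably compact subsets of $\mathbb{R}$ are bounded, which together give pseudocompactness of $K$. Once that is in place the conclusion is immediate from the definitions of $Ps$-normal and $CC$-normal, using the \emph{same} witnessing space $Y$ and bijection $f$.
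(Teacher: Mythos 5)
Your proposal is correct and follows essentially the same route as the paper: both use the same witness $(Y,f)$ and reduce the claim to the fact that every countably compact space is pseudocompact. The only difference is that you spell out the proof of that standard fact, which the paper simply asserts.
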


\begin{proof}
	Let $X$ be $Ps$-normal space and $C$ be a countably compact subspace of $X$. $X$ is $Ps$-normal, then there exists a normal space $Y$ and a bijective map $f:X\mapsto Y$ such that $f\lvert_K:K\mapsto f(K)$ is homeomorphism for each pseudocompact subset $K$ of $X$. Since $C$ is countably compact then it is pseudocompact. Therefore $f\lvert_C:C\mapsto f(C)$ is homeomorphism. Hence $X$ is $CC$-normal space.	
\end{proof}	

Also every $CC$-normal space is $C$-normal. It is easy to check that every normal space is $Ps$-normal. Hence we have
\\
\\
Normal $\Rightarrow$ $Ps$-normal $\Rightarrow$ $CC$-normal $\Rightarrow$ $C$-normal

\begin{theorem}
	\begin{itemize}
		\item[a)] If $X$ is $C$-normal and any pseudocompact subspace of $X$ is contained in a compact subspace of $X$, then $X$ is $Ps$-normal.
		
		\item[b)]  If $X$ is $CC$-normal and any pseudocompact subspace of $X$ is contained in a countably compact subspace of $X$, then $X$ is $Ps$-compact.
	\end{itemize}
\end{theorem}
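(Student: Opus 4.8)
The plan is to show that in each part the \emph{same} bijection witnessing the weaker separation property already witnesses $Ps$-normality; no new space $Y$ has to be constructed. The only tool needed is the standard fact that the restriction of a homeomorphism $h\colon A\to B$ to a subset $A'\subseteq A$ is a homeomorphism of $A'$ onto $h(A')$, combined with the transitivity of the subspace topology.

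\textbf{Part (a).} Since $X$ is $C$-normal, fix a normal space $Y$ and a bijection $f\colon X\to Y$ such that $f\lvert_K\colon K\to f(K)$ is a homeomorphism for every compact $K\subseteq X$. I claim the pair $(Y,f)$ also witnesses $Ps$-normality. Let $K\subseteq X$ be pseudocompact. By hypothesis there is a compact $L\subseteq X$ with $K\subseteq L$, so $f\lvert_L\colon L\to f(L)$ is a homeomorphism. Restricting it to $K$ yields a homeomorphism from $K$, equipped with the subspace topology it inherits from $L$ (which coincides with the one it inherits from $X$), onto $f(K)$, equipped with the subspace topology it inherits from $f(L)$ (which coincides with the one it inherits from $Y$). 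Hence $f\lvert_K\colon K\to f(K)$ is a homeomorphism, and $X$ is $Ps$-normal.

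\textbf{Part (b).} The argument is word-for-word the same with ``compact'' replaced by ``countably compact'': start from a normal space $Y$ and bijection $f$ witnessing $CC$-normality, take a pseudocompact $K\subseteq X$, embed it in a countably compact $L\subseteq X$, apply the homeomorphism $f\lvert_L$, and restrict it to $K$. (The conclusion should read ``$X$ is $Ps$-normal''; ``$Ps$-compact'' is not defined in the paper.)

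The only step requiring any care --- and the closest thing to an obstacle --- is the bookkeeping with subspace topologies: one must note that the topology $K$ carries as a subspace of $L$ agrees with the one it carries as a subspace of $X$, and similarly that $f(K)$ as a subspace of $f(L)$ agrees with $f(K)$ as a subspace of $Y$, so that the restriction of $f\lvert_L$ to $K$ is precisely the map $f\lvert_K$ appearing in the definition of $Ps$-normality. Everything else follows directly from the definitions.
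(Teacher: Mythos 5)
Your proof is correct and follows essentially the same route as the paper: reuse the witness $(Y,f)$ of $C$-normality (resp. $CC$-normality), enclose the pseudocompact set in a compact (resp. countably compact) set, and restrict the resulting homeomorphism. Your explicit bookkeeping of subspace topologies, and your note that ``$Ps$-compact'' should read ``$Ps$-normal,'' are both minor refinements of the paper's argument rather than a different approach.
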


\begin{proof}
	We prove $(a)$ only. $(b)$ can be prove analogously.
	
	Let $Y$ be a normal space and $f:X\mapsto Y$ be bijective mapping such that $f\lvert_K:K\mapsto f(K)$ is a homeomorphism for every compact subspace $K$ of $X$.
	
	Let $K$ be a pseudocompact subspace of $X$. By hypothesis there exists a compact subspace $A$ of $X$ such that $K\subseteq A$. Since $X$ is $C$-normal, then $f\lvert_A:A\mapsto f(A)$ is homeomorphism and hence $f\lvert_K:K\mapsto f(K)$ is homeomorphism. Therefore $X$ is $Ps$-compact.
\end{proof}	

The following example gives us a $Ps$-normal space which is not normal.

\begin{example}
	The square of Sorgenfrey line $\mathbb{R}_l\times \mathbb{R}_l$ is $C$-normal from Theorem 1.2 in \cite{AK2017} but it is not normal. We want to show $\mathbb{R}_l\times \mathbb{R}_l$ is $Ps$-normal. Let $S$ be a pseudocompact subset of $\mathbb{R}_l\times \mathbb{R}_l$ and $S_1,S_2$ be the first and second projections of $S$ respectively. Then $S_1,S_2$ are pseudocompact. $\mathbb{R}_l$ is hereditarily normal. Therefore $S_1, S_2$ are normal, $T_1$ and pseudocompact. This implies that $S_1, S_2$ are countably compact. Again $\mathbb{R}_l$ is hereditarily Lindel$\ddot{o}$f. Thus $S_1, S_2$ are countably compact and Lindel$\ddot{o}$f. Therefore $S_1, S_2$ are compact in $\mathbb{R}_l\times \mathbb{R}_l$. Hence $S_1\times S_2$ is compact in $\mathbb{R}_l\times \mathbb{R}_l$. Also $S\subseteq S_1\times S_2$. Hence by Theorem 2.2(a), $\mathbb{R}_l\times \mathbb{R}_l$ is $Ps$-normal.
\end{example}

\begin{theorem}
	$Ps$-normality is a topological property.
\end{theorem}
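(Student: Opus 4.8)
The plan is to transport the witnessing data along the homeomorphism. Suppose $X$ is $Ps$-normal and $Z$ is a space homeomorphic to $X$, say $\varphi\colon Z\to X$ is a homeomorphism. Since $X$ is $Ps$-normal, fix a normal space $Y$ and a bijection $f\colon X\to Y$ such that $f\lvert_K\colon K\to f(K)$ is a homeomorphism for every pseudocompact $K\subseteq X$. I would then take the \emph{same} space $Y$ together with the composite bijection $g=f\circ\varphi\colon Z\to Y$ as the witness for $Z$; since $Y$ is already normal, nothing needs to be checked on the target side.

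The verification breaks into the following steps, carried out in order. First, $g$ is a bijection, being a composition of bijections. Second, given a pseudocompact subspace $K\subseteq Z$, observe that $\varphi(K)$ is pseudocompact in $X$: pseudocompactness is preserved by homeomorphisms (indeed by continuous surjections, since a real-valued continuous function on $\varphi(K)$ pulls back along $\varphi\lvert_K$ to a bounded continuous function on $K$). Third, because $\varphi(K)$ is pseudocompact in $X$, the map $f\lvert_{\varphi(K)}\colon\varphi(K)\to f(\varphi(K))$ is a homeomorphism by the choice of $f$. Finally, write
\[
g\lvert_K \;=\; \bigl(f\lvert_{\varphi(K)}\bigr)\circ\bigl(\varphi\lvert_K\bigr)\colon K\longrightarrow g(K),
\]
a composition of two homeomorphisms (the restriction $\varphi\lvert_K$ of the homeomorphism $\varphi$, followed by $f\lvert_{\varphi(K)}$), hence itself a homeomorphism onto $g(K)=f(\varphi(K))$. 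Therefore $Z$ is $Ps$-normal.

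I do not expect a genuine obstacle here; the argument is the routine ``push the structure forward along a homeomorphism'' pattern. The only point that deserves an explicit word is that the class of pseudocompact spaces is closed under homeomorphism, which is exactly what lets $\varphi$ match pseudocompact subsets of $Z$ with pseudocompact subsets of $X$; everything else is formal manipulation of restrictions and compositions of bijections and homeomorphisms. The same template, with ``normal'' replaced by ``Tychonoff'', will later yield that $Ps$-Tychonoffness is a topological property as well.
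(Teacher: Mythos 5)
Your proposal is correct and follows essentially the same route as the paper: the paper also transports the witness along the homeomorphism, taking $f\circ g^{-1}\colon Z\to Y$ where $g\colon X\to Z$ is the homeomorphism. You simply spell out the details (preservation of pseudocompactness under homeomorphism, composition of restricted homeomorphisms) that the paper leaves implicit.
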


\begin{proof}
	Let $X$ be a $Ps$-normal and $X$ be homeomorphic to $Z$. Let $Y$ be a normal space and $f:X\mapsto Y$ be bijective mapping such that $f\lvert_S:S\mapsto f(S)$	is homeomorphism for any pseudocompact subspace $K$ of $X$.
	Let $g:X\mapsto Z$ be the homeomorphism. Then $f\circ g^{-1}:Z\mapsto Y$ is the required map.
\end{proof}	

\begin{theorem}
	If $X$ is pseudocompact but not normal, then $X$ is not $Ps$-normal.
\end{theorem}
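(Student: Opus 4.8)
The plan is to argue by contradiction, exploiting the fact that when $X$ is pseudocompact, $X$ is itself a pseudocompact subspace of $X$, so the witnessing bijection from the definition of $Ps$-normality is forced to be a global homeomorphism.

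So suppose, for contradiction, that $X$ is pseudocompact, not normal, and $Ps$-normal. By the definition of $Ps$-normality there is a normal space $Y$ and a bijection $f\colon X\mapsto Y$ such that $f\lvert_K\colon K\mapsto f(K)$ is a homeomorphism for every pseudocompact subspace $K$ of $X$. Now take $K=X$: since $X$ is pseudocompact, $X$ qualifies as such a subspace, and since $f$ is a bijection we have $f(X)=Y$. Hence $f=f\lvert_X\colon X\mapsto Y$ is a homeomorphism.

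It follows that $X$ is homeomorphic to $Y$. Since normality is a topological property and $Y$ is normal, $X$ is normal, contradicting the hypothesis. Therefore $X$ cannot be $Ps$-normal.

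There is essentially no obstacle here; the only point worth stating carefully is that $f(X)=Y$ (so that $f$ is onto $Y$, not merely onto a proper normal subspace), which is immediate from $f$ being a bijection, and that $X$ trivially counts as a pseudocompact subspace of itself under the pseudocompactness hypothesis. One could also phrase the same argument without contradiction: if $X$ is both pseudocompact and $Ps$-normal, the displayed homeomorphism $f\colon X\mapsto Y$ shows at once that $X$ is normal.
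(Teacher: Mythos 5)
Your argument is correct and is essentially identical to the paper's own proof: both apply the witnessing bijection to $K=X$ itself (which is pseudocompact by hypothesis) to conclude that $f$ is a global homeomorphism onto the normal space $Y$, contradicting the non-normality of $X$. No gaps; your extra remark that $f(X)=Y$ just makes explicit a point the paper leaves implicit.
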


\begin{proof}
	Let $X$ be pseudocompact but not normal. If possible let there exist a normal space $X$ and a bijective mapping $f: X\mapsto Y$ such that $f\lvert_K :K\mapsto f(K)$ is a homeomorphism for any pseudocompact subspace $K$ of $X$. In particular $f:X\mapsto Y$ is a homeomorphism. This is a contradiction as $Y$ is normal and $X$ is not normal. Hence $X$ is not $Ps$-normal.
\end{proof}

We have seen that every $Ps$-normal space is $CC$-normal. The following example shows that the converse is not true.

\begin{example}
	Consider $(\mathbb{R},\tau_c)$, where $\tau_c$ is the co-countable topology on set of all real numbers $\mathbb{R}$. In Example 2.6, \cite{KA2017} it is shown that $(\mathbb{R},\tau_c)$ is $CC$-compact. $(\mathbb{R},\tau_c)$ is pseudocompact as each real valued continuous function on it is constant but it is not normal. Hence by Theorem 2.5, $(\mathbb{R},\tau_c)$ is not $Ps$-normal.

    More generally every irreducible space (i.e a space which has no pair of disjoint nonempty open sets) which is not normal is not $Ps$-normal.	
\end{example}

\begin{theorem}
	Let $X=\oplus_{\alpha\in\Lambda}X_\alpha$. $C\subseteq X$ is psedocompact if and only if $\Lambda_{\circ}=\{\alpha\in \Lambda : C\cap X_\alpha\neq \phi\}$ is finite and $C\cap X_\alpha$ is pseudocomapct for each $\alpha\in \Lambda$.
\end{theorem}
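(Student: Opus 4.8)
The plan is to exploit the defining feature of a topological sum: each summand $X_\alpha$ is clopen in $X$, so for any subspace $C$ the traces $C\cap X_\alpha$ form a partition of $C$ into clopen subsets, and $C$ is itself homeomorphic to the topological sum $\oplus_{\alpha\in\Lambda_\circ}(C\cap X_\alpha)$. Two elementary facts then power both implications: (i) a clopen subspace $A$ of a pseudocompact space $Z$ is pseudocompact, because any continuous $g\colon A\to\mathbb R$ extends to a continuous function on $Z$ by setting it equal to $0$ on $Z\setminus A$ (this is continuous precisely because $A$ is clopen), and boundedness of the extension forces boundedness of $g$; and (ii) a finite topological sum of pseudocompact spaces is pseudocompact, since a continuous real-valued function on it is bounded on each of the finitely many summands and hence bounded overall.

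For the forward direction, assume $C$ is pseudocompact. Each $C\cap X_\alpha$ is clopen in $C$, so by (i) it is pseudocompact; for $\alpha\notin\Lambda_\circ$ we have $C\cap X_\alpha=\varnothing$, which is trivially pseudocompact. It remains to prove $\Lambda_\circ$ is finite. If it were infinite, pick a countably infinite subfamily $\{\alpha_n:n\in\mathbb N\}\subseteq\Lambda_\circ$ and define $f\colon C\to\mathbb R$ by $f\equiv n$ on $C\cap X_{\alpha_n}$ and $f\equiv 0$ on $C\cap X_\alpha$ for every remaining index $\alpha$. As $f$ is constant on each member of a clopen partition of $C$, the preimage of any open subset of $\mathbb R$ is a union of such clopen pieces, hence open; so $f$ is continuous. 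But $f$ is unbounded, contradicting pseudocompactness of $C$. Hence $\Lambda_\circ$ is finite.

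For the converse, assume $\Lambda_\circ$ is finite and each $C\cap X_\alpha$ is pseudocompact. Then $C=\bigcup_{\alpha\in\Lambda_\circ}(C\cap X_\alpha)$ is a finite topological sum of pseudocompact spaces, so it is pseudocompact by (ii).

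The one point I would be most careful about is the continuity of the piecewise-constant witness $f$ in the forward direction: it relies on the $C\cap X_\alpha$ being \emph{open} in $C$, not merely closed, which is exactly where the sum structure (as opposed to an arbitrary partition into closed sets) is indispensable. The remaining verifications — the extension argument in (i) and the finite-maximum argument in (ii) — are routine.
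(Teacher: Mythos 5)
Your proof is correct and its key step --- the piecewise-constant unbounded function $f\equiv n$ on $C\cap X_{\alpha_n}$ ruling out an infinite $\Lambda_\circ$ --- is exactly the argument the paper gives, the paper dismissing the remaining implications as trivial. You have simply spelled out those routine parts (clopen traces are pseudocompact; a finite sum of pseudocompact spaces is pseudocompact), which is fine.
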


\begin{proof}
	We shall show that if $C\subseteq X$ is pseudocompact then $\Lambda_\circ$ is finite. Others parts are trivial.
	
	Let $C\subseteq X$ be pseudocompact. If possible $\Lambda_\circ$ be infinite. Let $\{\alpha_n:n\in \mathbb{N}\}$ be a countably infinite subset of $\Lambda_\circ$. Define a function $f:C
	\mapsto \mathbb{R}$ by $f(x)=n$ for $x\in X_{\alpha_n}$ and zero otherwise. Then $f\lvert_{C}:C\mapsto \mathbb{R}$ is continuous and unbounded function on $C$. This implies that $C$ is not pseudocompact, a contradiction.
\end{proof}

\begin{theorem}
	$Ps$-normality is an additive property.
\end{theorem}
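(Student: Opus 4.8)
The plan is to show that if $X = \bigoplus_{\alpha \in \Lambda} X_\alpha$ where each $X_\alpha$ is $Ps$-normal, then $X$ is $Ps$-normal. First I would fix, for each $\alpha$, a normal space $Y_\alpha$ and a bijection $f_\alpha : X_\alpha \mapsto Y_\alpha$ witnessing $Ps$-normality of $X_\alpha$, i.e. $f_\alpha\lvert_K$ is a homeomorphism onto $f_\alpha(K)$ for every pseudocompact $K \subseteq X_\alpha$. The natural candidate for the target space is $Y = \bigoplus_{\alpha \in \Lambda} Y_\alpha$, which is normal because normality is additive (a classical fact), and the natural candidate for the bijection is $f = \bigoplus_{\alpha} f_\alpha : X \mapsto Y$, which is clearly a bijection since each $f_\alpha$ is.

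The key step is to verify that $f\lvert_K : K \mapsto f(K)$ is a homeomorphism for every pseudocompact $K \subseteq X$. This is where Theorem 2.7 does the work: by that result, if $K$ is pseudocompact then $\Lambda_\circ = \{\alpha : K \cap X_\alpha \neq \emptyset\}$ is finite and each $K \cap X_\alpha$ is pseudocompact. Write $K_\alpha = K \cap X_\alpha$. Since $X$ is the topological sum, $K = \bigoplus_{\alpha \in \Lambda_\circ} K_\alpha$ as a subspace, and likewise $f(K) = \bigoplus_{\alpha \in \Lambda_\circ} f_\alpha(K_\alpha)$ sits inside $Y$ as a topological sum of the pieces (each $Y_\alpha$ is clopen in $Y$, so $f_\alpha(K_\alpha) = f(K) \cap Y_\alpha$ carries the subspace topology and these are clopen in $f(K)$). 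Now $f\lvert_{K_\alpha} = f_\alpha\lvert_{K_\alpha}$, which is a homeomorphism onto $f_\alpha(K_\alpha)$ precisely because $K_\alpha$ is pseudocompact and $f_\alpha$ witnesses $Ps$-normality of $X_\alpha$. A map between topological sums that restricts to a homeomorphism on each (clopen) summand is itself a homeomorphism, so $f\lvert_K$ is a homeomorphism. Hence $X$ is $Ps$-normal.

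I would also note the degenerate/boundary cases for completeness: the empty sum gives the empty space (trivially $Ps$-normal), and if $\Lambda_\circ = \emptyset$ then $K = \emptyset$ and there is nothing to check. The main obstacle — really the only nontrivial point — is the reduction from an arbitrary pseudocompact $K \subseteq X$ to its finitely many pieces $K \cap X_\alpha$; but Theorem 2.7 is tailor-made for this, so once it is invoked the argument is routine. Everything else (additivity of normality, the fact that a sum of homeomorphisms on clopen pieces is a homeomorphism, bijectivity of $f$) is standard and can be stated without detailed verification.
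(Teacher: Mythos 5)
Your proposal is correct and follows essentially the same route as the paper: form $Y=\bigoplus_\alpha Y_\alpha$ and $f=\bigoplus_\alpha f_\alpha$, then use Theorem 2.7 to reduce a pseudocompact $K\subseteq X$ to finitely many pseudocompact pieces $K\cap X_\alpha$. Your write-up actually supplies the one step the paper leaves implicit (that a map which is a homeomorphism on each clopen summand of a finite sum is a homeomorphism), and it avoids the paper's typo of hypothesizing that each $X_\alpha$ is ``pseudocompact'' rather than $Ps$-normal.
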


\begin{proof}
	Let $X=\oplus_{\alpha\in\Lambda}X_\alpha$ and each $X_\alpha$ is pseudocompact. Then for each $\alpha\in \Lambda$ there exists a normal space $Y_\alpha$ and a bijective mapping $f_\alpha :X_\alpha\mapsto Y_\alpha$ such that $f_\alpha\lvert_{C_\alpha}:C_\alpha\mapsto f_\alpha (C_\alpha)$ is homeomorphism for each pseudocompact subspace $C_\alpha$ of $X_\alpha$. Then $Y=\oplus_{\alpha\in\Lambda}Y_\alpha$ is a normal space. Consider the function $f=\oplus_{\alpha\in\Lambda}f_\alpha :X\mapsto Y$. Let $C\subseteq X$ be pseudocompact subspace of $X$. Then by Theorem 2.7, $\Lambda_{\circ}=\{\alpha\in \Lambda : C\cap X_\alpha\neq \phi\}$ is finite and $C\cap X_\alpha$ is pseudocomapct for each $\alpha\in \Lambda$. Then $f\lvert_C: C\mapsto f(C)$ is a homeomorphism. Hence $X$ is $Ps$-normal.
\end{proof}

A space $X$ is called Frechet if for any subset $B$ of $X$ and $x\in \overline{B}$ there exist a sequence $\{b_n\}$ of points of $B$ such that $b_n\rightarrow x$.

\begin{theorem}
	If $X$ is $Ps$-normal and Frechet, $f:X\mapsto Y$ is witness of $Ps$-normality, then $f$ is a continuous mapping.
\end{theorem}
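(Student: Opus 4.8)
The plan is to use the closure characterization of continuity: a map $f\colon X\to Y$ is continuous if and only if $f(\overline{A})\subseteq\overline{f(A)}$ for every $A\subseteq X$. So I fix an arbitrary $A\subseteq X$ and an arbitrary $x\in\overline{A}$, and the goal becomes showing $f(x)\in\overline{f(A)}$.

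Since $X$ is Fréchet, I can choose a sequence $\{b_n\}$ of points of $A$ with $b_n\to x$ in $X$. Now consider $K=\{b_n:n\in\mathbb{N}\}\cup\{x\}$ with the subspace topology inherited from $X$. A routine argument shows $K$ is compact: given any open cover of $K$, one member contains $x$ and hence all but finitely many of the $b_n$, and the remaining finitely many points are covered by finitely many further members. A compact space is pseudocompact, so $K$ is a pseudocompact subspace of $X$. Therefore the defining property of the witness $f$ applies: $f\lvert_K\colon K\mapsto f(K)$ is a homeomorphism, in particular continuous.

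Continuity of $f\lvert_K$ gives $f(b_n)\to f(x)$ in $f(K)$; since $f(K)$ carries the subspace topology inherited from $Y$, this convergence also holds in $Y$. Hence $f(x)$ lies in the closure (in $Y$) of $\{f(b_n):n\in\mathbb{N}\}\subseteq f(A)$, i.e.\ $f(x)\in\overline{f(A)}$. As $A$ and $x\in\overline{A}$ were arbitrary, $f(\overline{A})\subseteq\overline{f(A)}$ for every $A\subseteq X$, and so $f$ is continuous.

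I do not expect a serious obstacle here. The only points demanding a little care are the verification that a convergent sequence together with its limit is compact (so that it is pseudocompact and the witness hypothesis can be invoked), and the observation that convergence computed in the subspace $f(K)$ coincides with convergence in $Y$. One could instead argue by recalling that a Fréchet space is sequential and that a sequentially continuous map out of a sequential space is continuous, but the closure formulation above keeps the argument self-contained and matches the Fréchet hypothesis directly.
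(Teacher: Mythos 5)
Your proposal is correct and follows essentially the same route as the paper: use the Fr\'echet property to extract a sequence in $A$ converging to $x\in\overline{A}$, observe that the sequence together with its limit is compact and hence pseudocompact, invoke the witness to get a homeomorphism on that set, and conclude $f(\overline{A})\subseteq\overline{f(A)}$. The only cosmetic difference is that you finish via convergence of $f(b_n)$ to $f(x)$ while the paper argues directly with open neighbourhoods of $f(x)$; your version also spells out the compactness of the convergent sequence, which the paper merely asserts.
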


\begin{proof}
	It is sufficient to show that for any $A\subseteq X, f(\overline{A})\subseteq \overline{f(A)}$.
	
	Let $A\subseteq X$ and $y\in f(\overline{A})$. Let $x$ be the unique point in $\overline{A}$ such that $f(x)=y$. Since $X$ is Frechet, then there exists a sequence $\{a_n\}$ in $A$ such that $x$ is a limit of $\{a_n\}$. Let $K=\{a_n:n\in \mathbb{N}\}\cup \{x\}$. Then $K$ is compact and therefore pseudocompact. Then $f\lvert_K: K\mapsto f(K)$ is homeomorphism as $X$ is $Ps$-normal.
	
	Let $U$ be any open set in $Y$. Then $U\cap f(K)$ is open set in $f(K)$ containing $y$. Now $f(\{a_n:n\in\mathbb{N})\subseteq f(K)\cap f(A)$ and $U\cap f(K)\neq \phi$. Then $U\cap f(A)\neq \phi$. Therefore $y\in \overline{f(A)}$. Hence $f(\overline{A})\subseteq \overline{f(A)}$.
\end{proof}

\section{$Ps$-Tychonoff spaces}

\begin{definition}
	A space $X$ is called $Ps$-Tychonoff($C$-Tychonoff) space if there exists a Tychonoff space $Y$ and a bijective mapping $f:X\mapsto Y$ such that $f\lvert_C: C\mapsto f(C)$ is homeomorphism for every Pseudocompact(compact) subset $C$ of $X$.
\end{definition}

On using the arguments of Theorems 2.1, 2.5, 2.9, 2.2, 2.4, 2.8 closely, we get the following properties respectively.

Every Tychonoff space is $Ps$-Tychonoff space.

\begin{theorem}
	Every $Ps$-Tychonoff space is $C$-Tychonoff.
\end{theorem}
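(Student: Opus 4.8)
The plan is to mimic the proof of Theorem 2.1 verbatim, replacing the implication ``countably compact $\Rightarrow$ pseudocompact'' by the implication ``compact $\Rightarrow$ pseudocompact''. Concretely, I would start by unpacking the hypothesis: since $X$ is $Ps$-Tychonoff, fix a Tychonoff space $Y$ and a bijection $f\colon X\mapsto Y$ such that $f\lvert_C\colon C\mapsto f(C)$ is a homeomorphism for every pseudocompact subset $C$ of $X$. The claim will follow once I show that this very same pair $(Y,f)$ also witnesses $C$-Tychonoffness of $X$, i.e.\ that $f\lvert_K\colon K\mapsto f(K)$ is a homeomorphism for every compact subspace $K$ of $X$.

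The key step is the observation that every compact space is pseudocompact: if $K$ is compact and $g\colon K\mapsto\mathbb{R}$ is continuous, then $g(K)$ is a compact subset of $\mathbb{R}$, hence bounded, so every real-valued continuous function on $K$ is bounded. Therefore any compact subspace $K$ of $X$ is in particular a pseudocompact subset of $X$, and the defining property of $f$ applies to $K$, giving that $f\lvert_K\colon K\mapsto f(K)$ is a homeomorphism. Since $Y$ is Tychonoff and $f$ is a bijection, this is exactly the statement that $X$ is $C$-Tychonoff.

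I do not expect any genuine obstacle here; the argument is a one-line inclusion of classes of subspaces together with the trivial fact about compactness implying pseudocompactness, exactly parallel to the chain ``Normal $\Rightarrow$ $Ps$-normal $\Rightarrow$ $CC$-normal $\Rightarrow$ $C$-normal'' established earlier. The only point deserving a sentence of care is recording why compactness implies pseudocompactness, since the paper's definition of pseudocompactness is phrased in terms of boundedness of continuous real-valued functions rather than in terms of a covering property; once that is noted, the proof closes immediately.
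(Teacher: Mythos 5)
Your proposal is correct and is precisely the argument the paper intends: the paper proves Theorem 3.2 by appealing to ``the arguments of Theorem 2.1,'' which is exactly your strategy of reusing the same witness pair $(Y,f)$ and invoking the inclusion of compact subspaces among pseudocompact ones. Your explicit justification that compactness implies pseudocompactness (via boundedness of continuous images in $\mathbb{R}$) is a welcome extra sentence of care, but the route is the same.
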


\begin{theorem}
	If $X$ is Pseudocompact but not Tychonoff, then $X$ is not $Ps$-normal.
\end{theorem}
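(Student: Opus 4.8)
The plan is to argue by contradiction and to reuse, almost verbatim, the scheme of the proof of Theorem 2.5, the one extra ingredient being the classical implication that a normal space is Tychonoff.

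First I would suppose, toward a contradiction, that $X$ is $Ps$-normal. By definition there then exist a normal space $Y$ and a bijection $f\colon X\to Y$ such that $f\lvert_K\colon K\to f(K)$ is a homeomorphism for every pseudocompact subspace $K$ of $X$. Next I would invoke the hypothesis that $X$ itself is pseudocompact: taking $K=X$ is legitimate, and then $f\lvert_X$ is nothing but $f$, so that $f\colon X\to Y$ is a homeomorphism. Hence $X$ is homeomorphic to the normal space $Y$.

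The decisive step---and the only one I expect to require any care---is the passage from ``normal'' to ``Tychonoff''. Here I would invoke the standard fact that every normal ($T_4$) space is Tychonoff ($T_{3.5}$): since singletons are closed under the $T_1$ assumption that the paper attaches to the word ``normal'', Urysohn's lemma produces, for each point and each closed set missing it, a continuous real-valued function separating them, so $Y$ is completely regular; together with $T_1$ this makes $Y$ Tychonoff. Because $X\cong Y$ and Tychonoffness is a topological property, $X$ is Tychonoff, contradicting the hypothesis. Therefore no such pair $(Y,f)$ can exist, and $X$ is not $Ps$-normal.

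The main obstacle is thus simply to make the implication normal $\Rightarrow$ Tychonoff explicit and to flag that it rests on the $T_1$ convention built into the paper's use of normality; once that is granted, the remainder of the argument is a direct transcription of the reasoning used for Theorem 2.5, with pseudocompactness of the whole space $X$ forcing $f$ to be a global homeomorphism.
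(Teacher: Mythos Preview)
Your argument hinges on the implication ``normal $\Rightarrow$ Tychonoff'', which you justify by assuming the paper builds $T_1$ into its use of the word ``normal''. It does not: Example~3.11 declares the right-ray topology $(\mathbb{R},\tau)$ to be normal, and that space is not even $T_1$. So the step ``$Y$ normal, hence $Y$ Tychonoff'' is unavailable, and the contradiction does not materialise.

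Worse, under the paper's conventions the statement as printed is actually false, and Example~3.11 itself is a counterexample: $(\mathbb{R},\tau)$ is pseudocompact (every continuous real-valued function on it is constant, since any two nonempty open sets meet), it is not Tychonoff, yet it \emph{is} $Ps$-normal because it is normal. The conclusion ``not $Ps$-normal'' is evidently a typographical slip for ``not $Ps$-Tychonoff''; the paper lists this theorem among those obtained ``on using the arguments of Theorems~2.1, 2.5, \dots\ closely'', with Theorem~3.3 matched to Theorem~2.5, so the intended proof is the verbatim transcription of~2.5 with ``normal'' replaced by ``Tychonoff'' and ``$Ps$-normal'' by ``$Ps$-Tychonoff''. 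With that correction your outline is exactly the paper's argument, and the detour through ``normal $\Rightarrow$ Tychonoff'' is neither needed nor valid here.
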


Therefore $\mathbb{R}$ with co-finite topology is not $Ps$-normal.

\begin{theorem}
If $X$ is $Ps$-Tychonoff and Frechet,  $f:X\mapsto Y$ is witness of $Ps$-Tychonoffness, then $f$ is continuous.
\end{theorem}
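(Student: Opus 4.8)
The plan is to run the proof of Theorem 2.9 essentially verbatim, observing that nothing in that argument uses normality of the target space $Y$ --- only that $f$ restricts to a homeomorphism on every compact subspace of $X$, which holds here because a compact space is pseudocompact and $f$ is a witness of $Ps$-Tychonoffness. As there, it suffices to verify the closure criterion for continuity: $f(\overline{A})\subseteq\overline{f(A)}$ for every $A\subseteq X$.

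So I would fix $A\subseteq X$ and a point $y\in f(\overline{A})$, let $x$ be the unique point of $\overline{A}$ with $f(x)=y$ (unique because $f$ is a bijection), and use the Frechet property to pick a sequence $\{a_n\}$ in $A$ with $a_n\to x$. Setting $K=\{a_n:n\in\mathbb N\}\cup\{x\}$, the set $K$ is compact, hence pseudocompact, so $f\lvert_K:K\to f(K)$ is a homeomorphism and in particular continuous.

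The last step is to transport the convergence along $f$: since $a_n\to x$ in $X$ and all these points lie in $K$, we have $a_n\to x$ in the subspace $K$; continuity of $f\lvert_K$ gives $f(a_n)\to f(x)=y$ in $f(K)$; and convergence in the subspace $f(K)$ forces convergence in $Y$. Hence every open $U\subseteq Y$ containing $y$ contains $f(a_n)$ for all large $n$, and since each $f(a_n)\in f(A)$ this yields $U\cap f(A)\neq\emptyset$, so $y\in\overline{f(A)}$. This proves $f(\overline{A})\subseteq\overline{f(A)}$, whence $f$ is continuous.

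I do not expect a genuine obstacle. The only point requiring a little care is the two-stage passage of the convergent sequence through the subspace topologies of $K$ and of $f(K)$ (left somewhat implicit in the proof of Theorem 2.9), together with the observation that the target being Tychonoff rather than normal plays no role in the argument.
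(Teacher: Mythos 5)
Your proposal is correct and is essentially the paper's own argument: the paper proves Theorem 3.4 by explicitly deferring to the proof of Theorem 2.9, which is exactly the closure-criterion argument with the compact (hence pseudocompact) set $K=\{a_n:n\in\mathbb N\}\cup\{x\}$ that you reproduce. Your added care in transporting the convergent sequence through the subspace topologies of $K$ and $f(K)$ only tightens a step the paper leaves implicit.
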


\begin{theorem}
	Let $X$ be an $L$-Tychonoff space. If each Pseudocompact subset in $X$ is contained in a Lindeloff subset, then $X$ is $Ps$-Tychonoff.
\end{theorem}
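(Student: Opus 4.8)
The plan is to mimic the proof of Theorem 2.2(a), replacing ``compact'' by ``Lindel\"of'' throughout. Since $X$ is $L$-Tychonoff, I would fix a Tychonoff space $Y$ and a bijection $f:X\mapsto Y$ such that $f\lvert_K:K\mapsto f(K)$ is a homeomorphism for every Lindel\"of subspace $K$ of $X$, and then argue that this very same pair $(Y,f)$ witnesses $Ps$-Tychonoffness of $X$. Nothing new need be constructed; the content is entirely in checking that the homeomorphism condition survives passage from Lindel\"of subspaces to pseudocompact ones.

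So let $C$ be an arbitrary pseudocompact subspace of $X$. By the hypothesis there is a Lindel\"of subspace $L$ of $X$ with $C\subseteq L$, and by the choice of $f$ the restriction $f\lvert_L:L\mapsto f(L)$ is a homeomorphism. The key step is the elementary observation that a homeomorphism restricts, on any subspace, to a homeomorphism onto its image: since $C$ carries the topology it inherits from $L$ (equivalently, from $X$) and $f(C)$ carries the topology it inherits from $f(L)$ (equivalently, from $Y$), the map $f\lvert_C=(f\lvert_L)\lvert_C:C\mapsto f(C)$ is a homeomorphism. As $f$ is a bijection of $X$ onto $Y$, $f\lvert_C$ is also a bijection onto $f(C)$. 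Since $C$ was an arbitrary pseudocompact subspace and $Y$ is Tychonoff, $X$ is $Ps$-Tychonoff.

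I do not expect a genuine obstacle here; the proof is essentially a one-line reduction. The only point deserving (routine) care is the compatibility of the subspace topologies, namely that the relative topology on $C$ as a subset of $L$ agrees with its relative topology as a subset of $X$, and likewise that the relative topology on $f(C)$ inside $f(L)$ agrees with that inside $Y$ — both being immediate from transitivity of the subspace topology. One should also note explicitly that no condition on the codomain remains to be checked, because $Y$ is already Tychonoff by assumption.
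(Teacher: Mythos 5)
Your proof is correct and follows exactly the route the paper intends: the paper proves this statement by appealing to the argument of Theorem 2.2(a), which is precisely your reduction (restrict the $L$-Tychonoff witness $f$ to a Lindel\"of superset $L$ of the pseudocompact set $C$, then restrict further to $C$). Your explicit remarks on transitivity of the subspace topology only make the same argument more careful.
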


\begin{theorem}
	$Ps$-Tychonoffness is a topological property.
\end{theorem}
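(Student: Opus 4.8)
The plan is to follow the proof of Theorem 2.4 almost verbatim, replacing ``normal'' by ``Tychonoff'' and using the fact that pseudocompactness is itself a topological invariant. Suppose $X$ is $Ps$-Tychonoff and let $g : X \mapsto Z$ be a homeomorphism. By Definition 3.1 there is a Tychonoff space $Y$ and a bijection $f : X \mapsto Y$ such that $f\lvert_K : K \mapsto f(K)$ is a homeomorphism for every pseudocompact subspace $K$ of $X$. I would take $f \circ g^{-1} : Z \mapsto Y$ as the candidate witness for $Z$.

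First I would observe that $f \circ g^{-1}$ is a bijection of $Z$ onto the Tychonoff space $Y$, being a composite of bijections. Then, given an arbitrary pseudocompact subspace $D$ of $Z$, I would check that $g^{-1}(D)$ is pseudocompact in $X$: if $h : g^{-1}(D) \mapsto \mathbb{R}$ were continuous and unbounded, then $h \circ \big(g^{-1}\lvert_D\big) : D \mapsto \mathbb{R}$ would be continuous and unbounded on $D$, contradicting pseudocompactness of $D$. Hence $f\lvert_{g^{-1}(D)} : g^{-1}(D) \mapsto f(g^{-1}(D))$ is a homeomorphism by the choice of $f$, and $g^{-1}\lvert_D : D \mapsto g^{-1}(D)$ is a homeomorphism since $g$ is. Composing these, $\big(f \circ g^{-1}\big)\lvert_D = f\lvert_{g^{-1}(D)} \circ g^{-1}\lvert_D : D \mapsto \big(f \circ g^{-1}\big)(D)$ is a homeomorphism, which is exactly the requirement of Definition 3.1. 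Thus $(Y, f \circ g^{-1})$ witnesses that $Z$ is $Ps$-Tychonoff.

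I do not expect any real obstacle here; the only substantive point is the remark that pseudocompactness passes back and forth along the homeomorphism $g$, so that pseudocompact subsets of $Z$ correspond to pseudocompact subsets of $X$. The remaining ingredients --- a restriction of a homeomorphism to a subspace is a homeomorphism onto its image, and a composite of homeomorphisms is a homeomorphism --- are routine. The same template simultaneously shows that $C$-Tychonoffness and $L$-Tychonoffness are topological properties, and indeed is just the $Ps$-Tychonoff analogue of the argument already given for $Ps$-normality in Theorem 2.4.
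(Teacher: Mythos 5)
Your proposal is correct and is exactly the argument the paper intends: Theorem 3.6 is justified in the paper only by the remark that it follows from the argument of Theorem 2.4, and your proof is that argument with ``normal'' replaced by ``Tychonoff,'' with the details (invariance of pseudocompactness under $g$ and the composition of restricted homeomorphisms) usefully spelled out.
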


\begin{theorem}
	$Ps$-Tychonoffness is additive property.
\end{theorem}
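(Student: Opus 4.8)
The plan is to follow the proof of Theorem 2.8 essentially verbatim, replacing ``normal'' by ``Tychonoff'' throughout and leaning on Theorem 2.7 to control the pseudocompact subsets of a topological sum. So suppose $X=\oplus_{\alpha\in\Lambda}X_\alpha$ with each $X_\alpha$ being $Ps$-Tychonoff, witnessed by a Tychonoff space $Y_\alpha$ and a bijection $f_\alpha:X_\alpha\mapsto Y_\alpha$ such that $f_\alpha\lvert_{C_\alpha}:C_\alpha\mapsto f_\alpha(C_\alpha)$ is a homeomorphism for every pseudocompact subspace $C_\alpha$ of $X_\alpha$.

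First I would put $Y=\oplus_{\alpha\in\Lambda}Y_\alpha$ and observe that an arbitrary topological sum of Tychonoff spaces is again Tychonoff: the summands are clopen, so $T_1$ is clear, and a continuous $[0,1]$-valued function separating a point from a closed set inside one summand is extended by the constant $1$ on every other summand. Then $f=\oplus_{\alpha\in\Lambda}f_\alpha:X\mapsto Y$ is a bijection, since each $f_\alpha$ is a bijection and the families $\{X_\alpha\}$, $\{Y_\alpha\}$ partition $X$ and $Y$ respectively.

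Next, let $C\subseteq X$ be a pseudocompact subspace. By Theorem 2.7 the set $\Lambda_\circ=\{\alpha\in\Lambda:C\cap X_\alpha\neq\emptyset\}$ is finite and $C_\alpha:=C\cap X_\alpha$ is pseudocompact for each $\alpha$. Hence $C=\bigcup_{\alpha\in\Lambda_\circ}C_\alpha$ is a finite partition of $C$ into relatively clopen pieces, and correspondingly $f(C)=\bigcup_{\alpha\in\Lambda_\circ}f_\alpha(C_\alpha)$ is a finite partition of $f(C)$ into relatively clopen pieces (distinct summands of $Y$ being disjoint and clopen). On each piece $f$ coincides with $f_\alpha$, which restricts to a homeomorphism $C_\alpha\mapsto f_\alpha(C_\alpha)$; since a bijection that is a homeomorphism on each member of a finite clopen cover of its domain, mapping it onto a clopen cover of its range, is itself a homeomorphism, we conclude that $f\lvert_C:C\mapsto f(C)$ is a homeomorphism. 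Therefore $X$ is $Ps$-Tychonoff.

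I do not anticipate a genuine obstacle; the only points needing a line of care are that topological sums preserve the Tychonoff property and the routine gluing argument for $f\lvert_C$, both of which are standard.
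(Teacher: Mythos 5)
Your proof is correct and is essentially the paper's intended argument: the paper proves Theorem 3.7 only by the remark that it follows ``on using the arguments of Theorem 2.8,'' i.e.\ exactly the replacement of ``normal'' by ``Tychonoff'' in the topological-sum construction together with Theorem 2.7, which is what you carried out. Your write-up is in fact more careful than the paper's, since you justify the gluing of $f\lvert_C$ over the finite clopen partition, a step the paper asserts without comment.
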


\begin{theorem}
	If $X$ is $T_1$ space and only Pseudocompact subset of $X$ is finite, then $X$ is $Ps$-Tychonoff.
\end{theorem}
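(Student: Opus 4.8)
The plan is to use the hypothesis in its strongest form: the pseudocompact subsets of $X$ that the map $f$ must preserve are all finite, and in a $T_1$ space a finite set carries no topology other than the discrete one. So I expect to be able to take $Y$ to be a discrete space and $f$ the identity, making the homeomorphism requirement essentially vacuous.

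First I would record the elementary fact that every finite subset $C$ of a $T_1$ space is discrete in the subspace topology: each singleton $\{x\}$ with $x\in C$ is closed in $X$, so $C\setminus\{x\}$ is closed in $X$, whence $\{x\}=C\setminus(C\setminus\{x\})$ is open in $C$. Consequently, under the hypothesis every pseudocompact subspace of $X$ is a finite discrete space.

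Next I would exhibit the witness. Let $Y$ denote the underlying set of $X$ equipped with the discrete topology, and let $f:X\mapsto Y$ be the identity bijection. Since discrete spaces are metrizable, they are Tychonoff, so $Y$ is a Tychonoff space. Now let $C\subseteq X$ be a pseudocompact subspace; then $C$ is finite, hence discrete as a subspace of $X$ by the previous paragraph, while $f(C)=C$ is discrete as a subspace of $Y$ because $Y$ is discrete. Any bijection between two finite discrete spaces is automatically a homeomorphism, so $f\lvert_C:C\mapsto f(C)$ is a homeomorphism. Thus $f$ witnesses that $X$ is $Ps$-Tychonoff.

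There is no genuine obstacle in this argument; the only steps needing a word of justification are that a finite subset of a $T_1$ space is discrete and that a discrete space is Tychonoff, both of which are standard. The point of choosing $Y$ discrete is precisely that every subspace of $Y$ — in particular every image $f(C)$ — is discrete, so the condition on $f$ reduces to the already-noted fact about finite subsets of $X$.
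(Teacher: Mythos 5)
Your proof is correct and follows essentially the same route as the paper: take $Y$ to be the underlying set of $X$ with the discrete topology, let $f$ be the identity, and use that a finite subset of a $T_1$ space is discrete so the restriction to any pseudocompact (hence finite) subspace is automatically a homeomorphism. Your write-up merely spells out the discreteness of finite $T_1$ subspaces in more detail than the paper does.
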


\begin{proof}
	Let $Y=X$ and $Y$ be discrete topological space. Let $I: X\mapsto X$ be the identity mapping. Let $K$ be pseudocompact subset of $X$. Then by the hypothesis $K$ is finite. Therefore $K$ is finite and $T_1$. therefore $K$ is a discrete topological space. Thus $I: K\mapsto I(K)$ is homeomorphism. Henece $X$ is $Ps$-Tychonoff.
\end{proof}	

\begin{example}
	Let $X=\mathbb{R}\cup \{ i,j\}$, where $i\notin \mathbb{R}$ and $j\notin \mathbb{R}$. Define a topology on $X$ as follows: Each point of $\mathbb{R}$ is isolated. A basic open neighbourhood of $i$ is a subset $U$ of X containing $i$ and $X\setminus U$ is countable. Also a basic neighbourhood of $j$ is a subset $V$ of $X$ containing $j$ and $X\setminus V$ is countable. Clearly $X$ is $T_1$ space. But $X$ is not $T_2$. Therefore $X$ is not Tychonoff.
	
	We want to show that the Pseudocompact subsets of $X$ are finite. Let $A$ be infinite subset of $X$. If $i, j\notin A$, then $X$ is infinite discrete and so it is not Pseudocompact.
	
	If $i, j\in A$. Let $B=\{a_n:n\in \mathbf{N}\}$ be countably infinite subset of $A$ and $i, j\notin B$. Define $f: A\mapsto \mathbb{R}$ as follows: $f(a_n)=n$ for each $n\in \mathbb{N}$ and zero otherwise. Each point of $A\setminus \{i,j\}$ is isolated and therefore $f$ is continuous on $A\setminus \{i,j\}$. $f(i)=0$. Let $U=A\setminus B$. Then $U$ is an open neighbourhood of $i$ and $f(U)=\{0\}$. Therefore $f$ is continuous at $i$. Similarly $f$ is continuous at $j$. Therefore $f$ is an unbounded continuous function on $A$. Therefore $A$ is not Pseudocompact.
	
	Similarly we can show that if either $i\in A$ or $j\in A$, then $A$ is not Pseudocompact.
	
	Therefore $X$ is $T_1$ and each Pseudocompact subset is finite. Therefore $X$ is $Ps$-Tychonoff.
\end{example}	

\begin{theorem}
	Every $Ps$-Tychonoff Frechet Lindeloff space is $Ps$-normal.
\end{theorem}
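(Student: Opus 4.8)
The plan is to show that the very same bijection that witnesses $Ps$-Tychonoffness of $X$ already witnesses $Ps$-normality, once we verify that its target space is in fact normal. So, first I would invoke the hypothesis: since $X$ is $Ps$-Tychonoff, there is a Tychonoff space $Y$ and a bijection $f:X\mapsto Y$ with $f\lvert_C:C\mapsto f(C)$ a homeomorphism for every pseudocompact $C\subseteq X$. Because $X$ is Frechet, Theorem 3.4 applies and yields that $f$ is continuous.

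Next I would transfer the covering property across $f$. As $f$ is a continuous surjection and $X$ is Lindelof, the image $Y=f(X)$ is Lindelof. Being Tychonoff, $Y$ is in particular regular, and every regular Lindelof space is normal; hence $Y$ is a normal space. At this point $f:X\mapsto Y$ is a bijection onto a normal space such that $f\lvert_C:C\mapsto f(C)$ is a homeomorphism for each pseudocompact $C\subseteq X$ — exactly the data demanded by the definition of $Ps$-normality. Therefore $X$ is $Ps$-normal.

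The argument is short, and the only genuine inputs beyond the definitions are two standard facts of general topology: a continuous image of a Lindelof space is Lindelof, and a regular Lindelof space is normal. The one place where care is needed — and the reason the Frechet hypothesis is not decorative — is that we must know $f$ is continuous \emph{before} we can push the Lindelof property of $X$ over to $Y$; without continuity of the witness map there is no reason for $Y$ to inherit any global covering property from $X$. Everything else amounts to the observation that upgrading the codomain from Tychonoff to normal leaves the local homeomorphism condition on pseudocompact subsets untouched, so no new map needs to be constructed.
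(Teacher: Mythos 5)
Your proof is correct and follows exactly the same route as the paper's: use the Frechet hypothesis via Theorem 3.4 to get continuity of the witness $f$, push the Lindel\"of property onto $Y$, and conclude normality of $Y$ from regular plus Lindel\"of, so the same $f$ witnesses $Ps$-normality. No differences worth noting.
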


\begin{proof}
	Let $X$ be $Ps$-Tychonoff Frechet Lindeloff space. Then there exists a Tychonoff space $Y$ and a bijective mapping $f: X\mapsto Y$ such that $f\lvert_K: K\mapsto f(K)$ is homeomorphism for any Pseudocompact subset $K$ of $X$. By Theorem 3.4, $f$ is continuous. Since continuous image of Lindeloff space is Lindeloff, therefore $Y$ is Lindeloff. Thus $Y$ is regular Lindeloff, this implies that $Y$ is normal. Hence $X$ is $Ps$-normal.
\end{proof}	

We now show that $Ps$-normality and $Ps$-Tychonoffness are independent properties. We reproduce the following two examples from \cite{AK2018} for our purpose.

\begin{example}
	Let $\tau=\{\emptyset, \mathbb{R}\}\cup \{(x,\infty): a \in \mathbb{R}\}$ be the topology on $\mathbb{R}$. Then $(\mathbb{R},\tau)$ is a normal space and therefore it is $Ps$-normal space. From Example 4,\cite{AK2018}, $(\mathbb{R},\tau)$ is not a $C$-Tychonoff space. Then by Theorem 3.2 it is not a $Ps$-Tychonoff space.
\end{example}

\begin{example}
	Let $G=\prod_{\alpha\in \omega_1}D_\alpha$, where $D_\alpha =\{0,1\}$ for each $\alpha$ and $\omega_1$ is the first uncountable ordinal. Let $H$ be the set of all points of $G$ with atmost contably many non-zero co-ordinates. Let $M=G\times H$. By Example 4 in \cite{AK2018} $M$ is a Tychonoff space and not $C$-normal. Therefore $M$ is a $Ps$-Tychonoff space. But every $Ps$-normal space is $C$-normal and $M$ is not $C$-normal, we have that $M$ is not $Ps$-normal.
\end{example}

\begin{theorem}
	Each subset of $Ps$-Tychonoff space is $Ps$-Tychonoff..
\end{theorem}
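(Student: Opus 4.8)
The plan is to take a witness of $Ps$-Tychonoffness of the ambient space and simply restrict it to the subset, exploiting the fact that the Tychonoff property is hereditary. First I would let $X$ be $Ps$-Tychonoff, so there is a Tychonoff space $Y$ and a bijection $f:X\mapsto Y$ such that $f|_C:C\mapsto f(C)$ is a homeomorphism for every pseudocompact subset $C$ of $X$. Given an arbitrary $A\subseteq X$, I would put $Z=f(A)$ equipped with the subspace topology inherited from $Y$, and let $g=f|_A:A\mapsto Z$. Then $g$ is a bijection, and since every subspace of a Tychonoff space is Tychonoff, $Z$ is Tychonoff.

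The heart of the argument is the observation that pseudocompactness is an intrinsic property of a space: if $K\subseteq A$ is pseudocompact as a subspace of $A$, then, because the subspace topology is transitive, $K$ carries exactly the same topology as a subspace of $X$, so $K$ is a pseudocompact subset of $X$. Consequently $f|_K:K\mapsto f(K)$ is a homeomorphism by the choice of $f$. Now $g|_K=f|_K$ as maps, $g(K)=f(K)$, and the topology that $f(K)$ inherits from $Z$ coincides with the one it inherits from $Y$ (transitivity of the subspace topology once more); hence $g|_K:K\mapsto g(K)$ is a homeomorphism. Therefore $g$ witnesses that $A$ is $Ps$-Tychonoff.

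The only facts doing real work here are the heredity of the Tychonoff property and the transitivity of the subspace topology, neither of which is deep. The main (and quite minor) point to get right is the remark that a pseudocompact subspace of $A$ is already a pseudocompact subspace of $X$, which is precisely what makes the restriction $f|_K$ available. It is also worth noting that this is exactly the step at which the analogous claim for $Ps$-normality fails: normality is not hereditary, so $f(A)$ need not be normal, and the construction above does not produce a witness of $Ps$-normality for $A$.
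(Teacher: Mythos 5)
Your proof is correct and follows essentially the same route as the paper's: restrict the witness $f$ to $A$, note that a pseudocompact subspace of $A$ is a pseudocompact subspace of $X$, and use heredity of the Tychonoff property for the codomain $f(A)$. Your version is in fact slightly more careful than the paper's, which leaves the Tychonoffness of $f(A)$ and the transitivity of the subspace topology implicit.
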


\begin{proof}
	Let $X$ be a $Ps$-Tychonoff space and $A$ be a subspace of $X$. Now there exists a Tychonoff space $Y$ and a bijective map $f:X\mapsto Y$ such that for every Pseudocompact subset $C$ of $X$, $f\lvert_C: C\mapsto f(C)$ is homeomorphism.
	
	Let $K$ be a Pseudocompact subset of $A$. Then $K$ is also a Pseudocompact subset of $X$. Then $f\lvert_K:K\mapsto f(K)$ is homeomorphism. Therefore $f\lvert_A$ is the witness of $Ps$-Tychonoffness of $A$.
\end{proof}	

Let $X$ be any topological space. Lat $X'=X\times \{1\}$. Late $A(X)=X\cup X'$. For $x\in X$ we denote $<x,1>$ as $x'$ and for $B\subseteq X$ let $B'=\{x':x\in B\}=B\times \{1\}$. Let $\{\beta (x): x\in X\}\cup \{\beta (x'):x'\in X'\}$ be neighbourhood system for some topology $\tau$ on $A(X)$, where for $x\in X, \beta (x)=\{ U\cup (U'\setminus\{x'\}): U$ is an open set in $X$ containing $x\}$ and for $x'\in X', \beta (x')=\{\{x'\}\}$. $(A(X),\tau)$ is called Alexandroff Duplicate of $X$.

The following problems are open.

\begin{question}
	Is $A(X)$ $Ps$-normal, whwn $X$ is $Ps$-normal?
\end{question}	

\begin{question}
	Is $A(X)$ $Ps$-Tychonoff, when $X$ is $Ps$-Tychonoff?
\end{question}

\section{Some special results}

\begin{theorem}
If $X$ is $C$-Tychonoff and Frechet, then for any compact set $A$ in $X$ and $x\notin A$, there exists a continuous function $h:X\mapsto [0,1]$ such that $h(x)=0$ and $h(A)=\{1\}$.
\end{theorem}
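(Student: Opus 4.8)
The plan is to first upgrade the witnessing bijection to a \emph{continuous} map using the Frechet hypothesis, and then simply transport a separating function back from the Tychonoff target.

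First I would show that the witness $f:X\mapsto Y$ of $C$-Tychonoffness is continuous, by exactly the argument used in Theorems 2.10 and 3.4. It suffices to verify $f(\overline{A})\subseteq \overline{f(A)}$ for every $A\subseteq X$. Given $y=f(z)$ with $z\in\overline{A}$, the Frechet property supplies a sequence $\{a_n\}$ in $A$ with $a_n\rightarrow z$; then $K=\{a_n:n\in\mathbb{N}\}\cup\{z\}$ is compact, so $f\lvert_K:K\mapsto f(K)$ is a homeomorphism. Since $z$ is a limit of $\{a_n\}$ inside $K$, its image $y$ is a limit of $\{f(a_n)\}$ inside $f(K)$, whence $y\in\overline{f(\{a_n:n\in\mathbb{N}\})}\subseteq\overline{f(A)}$. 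Thus $f$ is continuous.

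Next, with $A\subseteq X$ compact and $x\notin A$, continuity of $f$ gives that $f(A)$ is compact in $Y$; since $Y$ is Tychonoff, hence Hausdorff, $f(A)$ is closed in $Y$. Because $f$ is a bijection, $f(x)\notin f(A)$. Now complete regularity of $Y$ furnishes a continuous map $g:Y\mapsto[0,1]$ with $g(f(x))=0$ and $g(f(A))=\{1\}$. Finally, put $h=g\circ f:X\mapsto[0,1]$: it is continuous as a composition of continuous maps, $h(x)=g(f(x))=0$, and $h(A)=g(f(A))=\{1\}$, as required.

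The only genuine work is the first step, the continuity of $f$; this is where the Frechet hypothesis is used, and it is the same obstacle already handled in Theorem 2.10. Once continuity of $f$ is available, the rest is immediate from the standard separation properties of Tychonoff spaces, so I do not expect any further difficulty.
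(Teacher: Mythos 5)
Your proposal is correct and follows essentially the same route as the paper: establish continuity of the witness $f$ via the Frechet property, then transport the separating function from the Tychonoff space $Y$ back through $f$. The only cosmetic difference is that you reprove the continuity of $f$ inline, whereas the paper cites it as a known result (Theorem 5 of the AlZahrani reference); the paper also has a typo writing $f(x)\in f(A)$ where, as you correctly state, bijectivity gives $f(x)\notin f(A)$.
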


\begin{proof}
Let $f:X\mapsto Y$ be witness of $C$-Tychonoff. Since $X$ is $C$-compact and Frechet, then by Theorem 5\cite{AK2018}, $f$ is continuous. Since $f$ is continuous and $A$ is compact, then f(A) is compact in $Y$. $Y$ is Tychonoff and so it is $T_2$. Therefore $f(A)$ is closed in $Y$ and also $f(x)\in f(A)$. Since $Y$ is Tychonoff, therefore there exists a continuous function $g:Y\mapsto [0,1]$ such that $g(f(x))=0$ and $g(f(A))=\{1\}$. Then $h=g\circ f:X\mapsto [0,1]$ is a continuous function such that $h(x)=0$ and $h(A)=\{1\}$. This completes the proof.
\end{proof}

We give an alternative proof of the following result.

\begin{corollary}[Corollary 2\cite{AK2018}]
Every $C$-Tychonoff Frechet space is Urysonh.
\end{corollary}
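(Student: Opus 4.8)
The plan is to obtain this immediately from Theorem 4.1 by specializing the compact set there to a single point. Given two distinct points $x,y\in X$, note that $A=\{y\}$ is a compact subset of $X$ and $x\notin A$. Since $X$ is $C$-Tychonoff and Frechet, Theorem 4.1 applies to this $A$ and $x$ and produces a continuous function $h:X\mapsto[0,1]$ with $h(x)=0$ and $h(y)=1$. In particular $X$ is completely (functionally) Hausdorff, so every pair of distinct points is separated by a continuous real-valued function.

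Next I would promote this functional separation to the Urysohn separation by closed neighbourhoods. Put $U=h^{-1}\big([0,\tfrac13)\big)$ and $V=h^{-1}\big((\tfrac23,1]\big)$. Both are open in $X$ as preimages of open subsets of $[0,1]$ under the continuous map $h$, and $x\in U$, $y\in V$. By continuity of $h$ we have $\overline{U}\subseteq h^{-1}\big([0,\tfrac13]\big)$ and $\overline{V}\subseteq h^{-1}\big([\tfrac23,1]\big)$, and these two preimages are disjoint, hence $\overline{U}\cap\overline{V}=\emptyset$. Thus any two distinct points of $X$ admit open neighbourhoods with disjoint closures, which is exactly the assertion that $X$ is Urysohn.

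I do not expect any real obstacle here; the proof is a one-line consequence of Theorem 4.1. The only points that deserve a word of care are that Theorem 4.1 is phrased for an arbitrary compact set $A$, so one should observe that singletons qualify (which is automatic), and that no separation axiom needs to be assumed on $X$ as an input, since Theorem 4.1 already manufactures the separating continuous function outright. If instead one takes ``Urysohn'' to mean ``completely Hausdorff'', the first paragraph alone already finishes the argument.
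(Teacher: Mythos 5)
Your proposal is correct and follows essentially the same route as the paper: both apply Theorem 4.1 to the compact singleton $\{b\}$ and the point $a$, then separate $a$ and $b$ using the level sets of the resulting function $h$ (the paper takes the closed neighbourhoods $\{h\leq \tfrac13\}$ and $\{h\geq\tfrac12\}$ directly, while you take open sets and check their closures are disjoint --- a purely cosmetic difference). No issues.
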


\begin{proof}
Let $X$ be $C$-Tychonoff Frechet space and contains at least two points. Let $a, b$ be two distinct points of $X$. Then $\{b\}$ is a compact set and $a\notin \{b\}$. Then there exists a continuous functions$h:X\mapsto [0,1]$ such that $f(a)=0$ and $f(b)=1$. Let $U=\{x\in X: f(x)\leq \frac{1}{3}\}$ and $V=\{x\in X: f(x)\geq \frac{1}{2}\}$. Then $U, V$ are disjoint closed neighbourhoods of $a,b$ respectively. Hence $X$ is Urysonh.
\end{proof}

\begin{theorem}
Let $X$ be a $C$-normal Frechet space. Let $f:X\mapsto Y$ be witness of $C$-normality of $X$. In addition if $Y$ is $T_2$ space, then for any two disjoint compact sets $H, K$ in $X$, there exists a continuous function $h:X\mapsto [0,1]$ such that $h(H)=\{0\}$ and $h(K)=\{1\}$.
\end{theorem}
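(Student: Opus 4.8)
The plan is to mimic the argument of Theorem 4.1, replacing the Tychonoff separation of a point from a compact set by the normal separation of two disjoint closed sets. First I would invoke continuity of the witness map: since $X$ is $C$-normal and Frechet, Theorem 5 of \cite{AK2018} applies (exactly as in the proof of Theorem 4.1) and gives that $f:X\mapsto Y$ is continuous.

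Next I would transport the two compact sets to $Y$. Because $f$ is continuous, $f(H)$ and $f(K)$ are compact subsets of $Y$; since $Y$ is $T_2$, compact subsets are closed, so $f(H)$ and $f(K)$ are closed in $Y$. Moreover $f$ is a bijection and $H\cap K=\emptyset$, hence $f(H)\cap f(K)=\emptyset$; thus $f(H)$ and $f(K)$ are disjoint closed sets in the normal space $Y$.

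Now I would apply Urysohn's lemma in $Y$: there is a continuous $g:Y\mapsto[0,1]$ with $g(f(H))=\{0\}$ and $g(f(K))=\{1\}$. Finally set $h=g\circ f:X\mapsto[0,1]$; it is continuous as a composition of continuous maps, and $h(H)=g(f(H))=\{0\}$, $h(K)=g(f(K))=\{1\}$, which is exactly what is required.

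The only delicate point is the first step, the continuity of $f$, which is where the Frechet hypothesis (together with $C$-normality) is genuinely used; everything afterwards is a routine combination of ``continuous image of a compact set is compact'', ``compact subsets of a Hausdorff space are closed'', and Urysohn's lemma. One should also note that the hypothesis that $Y$ be $T_2$ is essential precisely at this point: without it the images $f(H)$ and $f(K)$ need not be closed in $Y$, and Urysohn's lemma could not be invoked.
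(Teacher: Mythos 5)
Your proposal is correct and follows essentially the same route as the paper's proof: continuity of the witness $f$ from the Frechet plus $C$-normality hypotheses, then pushing $H$ and $K$ forward to disjoint compact (hence closed, by $T_2$) subsets of the normal space $Y$, applying Urysohn's lemma there, and pulling back via $h=g\circ f$. You merely spell out the steps (closedness of compact sets in a Hausdorff space, disjointness of images via bijectivity) that the paper leaves implicit.
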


\begin{proof}
$A$ is $C$-normal and Frechet, then $f$ is continuous. Then $f(H), f(K)$ are disjoint compact sets in the $T_2$ space $Y$. By normality of $Y$, there exists a continuous function $g:Y\mapsto [0,1]$ such that $g(f(H))=\{0\}$ and $g(f(K))=\{1\}$. Then $h=g\circ f:X\mapsto [0,1]$ is a continuous function such that $h(H)=\{0\}$ and $h(K)=\{1\}$. This completes the proof.
\end{proof}

\textbf{Acknowledgments:} The authors would like to thank to Professor Sudip Kumar Acharyya and Professor Asit Baran Raha for their valuable suggestions.

\bibliographystyle{plain}

\begin{thebibliography}{6}
	
	\bibitem{AK2018}
	S. AlZahrani,
	$C$-Tychonoff and $L$-Tychonoff topological spaces, European Journal of Pure and Applied Mathematics, {\textbf 11} (2018), 882--892.
	
	
	\bibitem{AK2017}
	S. AlZahrani, L. Kalantan,
	$C$-normal topological property, Filomat, {\textbf 31} (2017), 407--411.
	
	\bibitem{H1948}
	E. Hewitt, Rings of real valued continuous functions. I, Trans. Amer. Math. Soc., 64(1948), 45--99.
	
	\bibitem{KA2017}
	L. Kalantan, M. Alhomieyed,
	$CC$-normal topological spaces, Turkish Journal of Mathematics, {\textbf 41} (2017), 749--755.


	
\end{thebibliography}

\end{document}